\theoremstyle{plain}
\newtheorem{theo}{Theorem} 
\newtheorem{lemm}[theo]{Lemma}
\theoremstyle{definition}
\newtheorem{rema}[theo]{Remark}
\newtheorem{remas}[theo]{Remarks}
\newtheorem{exs}[theo]{Examples}
\DeclareSymbolFont{pletters}{OT1}{cmr}{m}{sl}
\DeclareMathSymbol{s}{\mathalpha}{pletters}{`s}
\def\ba{\begin{align}}
\def\bad{\begin{aligned}}
\def\be{\begin{equation}}
\def\ea{\end{align}}
\def\ead{\end{aligned}}
\def\ee{\end{equation}}
\def\e{\eqref}
\def\eps{\varepsilon}
\def\la{\left\vert}
\def\le{\leq}
\def\mez{\frac{1}{2}}
\def\ra{\right\vert}
\def\xN{\mathbf{N}}
\def\xR{\mathbf{R}}
\title{A    stationary  phase type estimate }
\author{
T. Alazard} 
\address{D\'epartement de Math\'ematiques, UMR 8553 du CNRS, Ecole Normale Sup\'erieure, 45, rue d'Ulm 75005 Paris Cedex, France}
\author{N. Burq}
\address{Laboratoire de Math\'ematiques d'Orsay, UMR 8628 du CNRS, Universit\'e Paris-Sud, 91405 Orsay Cedex, France}
\author{C. Zuily}
\address{Laboratoire de Math\'ematiques d'Orsay, UMR 8628 du CNRS, Universit\'e Paris-Sud, 91405 Orsay Cedex, France}
\thanks{T.A., N.B. and C.Z. were supported in part by Agence Nationale de la Recherche
  project ANA\'E ANR-13-BS01-0010-03. N.B. was supported in part by Agence Nationale de la Recherche project  NOSEVOL, 2011 BS01019 01.}
\begin{document}
 \begin{abstract}The purpose of this note is to prove a stationary phase estimate well adapted to parameter dependent phases. In particular, no discussion is made on the positions (and behaviour) of critical points, no lower or upper bound on the gradient of the phase is assumed, and the dependence of the constants with respect to derivatives of the phase and symbols is explicit.
 \end{abstract}
\maketitle

For a fixed phase, the stationnary phase lemma (and its simplified version, the stationary phase estimate) is a very well understood tool which provides very good estimates for oscillatory integrals of the type 
\begin{equation}\label{eq.statio}
 I_{\phi, b}(\lambda)= \int_{\mathbb{R}^d} e^{i\lambda \Phi(\xi)} b(\xi) d\xi \Rightarrow |I_{\Phi, b}( \lambda)| \leq C \lambda^{- \frac d 2}
\end{equation}
The method of proof is quite standard and follows the classical path: 
\begin{enumerate}
\item Using the non degeneracy of the hessian of the phase, one knows that the critical points are isolated, hence for a compactly supported symbol there are finitely many such critical points.
\item Away from the critical points, the non stationary estimates (obtained for example by integrating by parts $N$ times with the operator 
$$L= \frac{ \nabla_\xi \Phi  \cdot \nabla_\xi} { i\lambda |\nabla_\xi \Phi|^2 }, $$ gives an estimate bounded by $C_N \lambda^{-N}$
\item Near each critical point, performing first a change of variables (the Morse Lemma) to reduce to the case where the phase is quadratic, and then and exact calculation in Fourier variables gives the estimate~\eqref{eq.statio}
\end{enumerate}
When   $d=1$, Van der Corput Lemma provides a very robust estimate. 

However, in higher dimensions, the situation is less simple, in particular when considering parameter dependent phases (with parameters living in a non-compact domain), where 
\begin{enumerate}
\item even away from the critical points, $\nabla_\xi \Phi$ can degenerate,
\item the determinant of the hessian can degenerate, 
\item the number of critical points can blow-up.
\end{enumerate}
In view of numerous applications (for example dispersion estimates for solutions to PDE's), a precise control of the behaviour, with respect to the phase and symbol,  of the constant $C$ in~\eqref{eq.statio} is necessary. Many robust methods to prove~\eqref{eq.statio} have been developped (see for example~\cite{BDC, MS, FRT, S}). However, it seems that none of these results gives an estimate directly applicable to general situations. This was the motivation for this note.
Let
$$I(\lambda) = \int_{\xR^d} e^{i \lambda \Phi(\xi)} b(\xi)\, d\xi $$
where $\Phi \in C^\infty(\xR^d)$ is a real phase, $b \in C_0^\infty(\xR^d)$ is a symbol. 

We shall set $K = \text{supp }b$  and let $V$ be a small open neighborhood of $K$.
We shall assume that    
\begin{equation}\label{hypE}
\begin{aligned}
(i)&\quad \mathcal{M}_k:= \sum_{2 \leq \vert \alpha \vert \leq k}\text{sup}_{\xi \in V} \vert D^\alpha_\xi \Phi(\xi) \vert  <+ \infty ,\quad   2 \leq k \leq d+2,\\
(ii)&\quad  \mathcal{N}_l:= \quad  \sum_{ \vert \alpha \vert \leq l} \text{sup}_{\xi \in K} \vert D^\alpha_\xi b(\xi) \vert  <+ \infty ,\quad l \leq d+1,\\
  (iii)& \quad \vert \text{det } \text{Hess } \Phi(\xi) \vert \geq a_0>0, \forall \xi \in V, \\
  \end{aligned}
 \end{equation}
 where $\text{Hess } \Phi$ denotes the Hessian matrix of $\Phi$. 
 
\begin{theo}\label{PS0}
There exists a constant $C$ such that, for all $(\Phi,b)$ satisfying   assumptions \eqref{hypE}, and for all $\lambda\geq 1$, 
\begin{equation}\label{estimation0}
\vert I(\lambda) \vert \leq  \frac{C}{a_0^{1+d}}  \big(1+\mathcal{M}_{d+2}^{\frac{d}{2}+d^2}\big)\mathcal{N}_{d+1}\lambda^{-\frac{d}{2}}.
\end{equation}
\end{theo}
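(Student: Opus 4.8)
The plan is to split $I(\lambda)$ according to the size of $|\nabla\Phi|$: on the set where $|\nabla\Phi|$ is below a threshold $\delta$ the integral is controlled by the measure of that set, which is small because $\Hess\Phi$ is non-degenerate, while on the complementary set a non-stationary phase estimate obtained by iterated integration by parts is available. Balancing the two contributions at $\delta\sim\lambda^{-1/2}$ produces the gain $\lambda^{-d/2}$. No information on the location or the number of the critical points is used; only a sublevel-set bound for $|\nabla\Phi|$ is needed, and this is where assumption \eqref{hypE}(iii) enters.

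\emph{Step 1: a sublevel-set estimate.} The geometric heart of the argument is a bound of the schematic form
\[
|\{\xi\in V:\ |\nabla\Phi(\xi)|\le t\}|\ \le\ C_d\,a_0^{-p}\,\mathrm{poly}(\mathcal{M}_{d+2})\,t^{d},\qquad t>0,
\]
together with the companion weighted estimate $\int_{\{|\nabla\Phi|>\delta\}\cap K}|\nabla\Phi|^{-2N}\,d\xi\le C_{d,N}\,a_0^{-p}\,\mathrm{poly}(\mathcal{M}_{d+2})\,\delta^{\,d-2N}$, valid for $2N>d$ and deduced from the first by the layer-cake formula. To obtain the sublevel bound one differentiates: $\nabla(|\nabla\Phi|^2)=2\,\Hess\Phi\cdot\nabla\Phi$, so $|\nabla|\nabla\Phi||\ge\sigma_{\min}(\Hess\Phi)\ge a_0/(C_d\,\mathcal{M}_2^{\,d-1})$ wherever $\nabla\Phi\ne0$ (using $\sigma_{\min}(A)\ge|\det A|/\|A\|^{d-1}$); hence $|\nabla\Phi|$ has no critical point, and its sublevel sets are controlled through the co-area formula and the change of variables $\eta=\nabla\Phi(\xi)$, whose Jacobian is $\ge a_0$ by \eqref{hypE}(iii).

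\emph{Steps 2--3: the two regimes and optimisation.} Fix $\chi\in C^\infty_0(\xR^d)$ with $\chi\equiv1$ near $0$ and split $I=I_<+I_>$, where $I_<=\int e^{i\lambda\Phi}\,b\,\chi(\nabla\Phi/\delta)\,d\xi$ and $I_>$ has amplitude supported in $\{|\nabla\Phi|\gtrsim\delta\}$. For $I_<$ one uses $|I_<|\le\mathcal{N}_0\,|\{|\nabla\Phi|\lesssim\delta\}|$ and Step 1. For $I_>$, since the amplitude vanishes where $\nabla\Phi=0$ one may integrate by parts $N=d+1$ times against $L=(i\lambda|\nabla\Phi|^2)^{-1}\,\nabla\Phi\cdot\nabla$ (which satisfies $L(e^{i\lambda\Phi})=e^{i\lambda\Phi}$); at each step one gains a factor $\lambda^{-1}$ and picks up at most two additional negative powers of $|\nabla\Phi|$, at most one negative power of $\delta$ from the derivatives of $\chi(\cdot/\delta)$, and derivatives of $\Phi$ of order $\le d+2$ and of $b$ of order $\le d+1$ --- which is exactly why the hypotheses involve $\mathcal{M}_{d+2}$ and $\mathcal{N}_{d+1}$. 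Summing these terms and using the weighted estimate of Step 1 gives $|I_>|\lesssim\lambda^{-(d+1)}\,a_0^{-p}\,\mathrm{poly}(\mathcal{M}_{d+2})\,\mathcal{N}_{d+1}\,\delta^{\,d-2(d+1)}$. Choosing $\delta$ so that the two bounds coincide, i.e.\ $\delta\sim\lambda^{-1/2}\,\mathrm{poly}(\mathcal{M}_{d+2})$, makes $|I_<|$ and $|I_>|$ both of order $\lambda^{-d/2}$; collecting the accumulated powers of $a_0$ and of $\mathcal{M}_{d+2}$ (and writing $1+\mathcal{M}_{d+2}^{\,d/2+d^2}$ to absorb the case $\mathcal{M}_{d+2}\le1$) yields \eqref{estimation0}.

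The main difficulty is Step 1: extracting from the single scalar hypothesis \eqref{hypE}(iii) a sublevel-set estimate for $|\nabla\Phi|$ with an \emph{explicit} dependence on $a_0$ and $\mathcal{M}_{d+2}$, obtained \emph{globally} --- without localising near the possibly numerous critical points of $\Phi$ --- and then carrying the exact exponents through Steps 2--3 so that one lands precisely on the constant $a_0^{-(1+d)}(1+\mathcal{M}_{d+2}^{d/2+d^2})$ rather than on a larger power. Once this sublevel bound is available the remaining analysis is routine.
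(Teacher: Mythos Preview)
Your overall strategy --- split by the size of $|\nabla\Phi|$ at level $\lambda^{-1/2}$, control the near-critical part by volume and the far part by $d+1$ integrations by parts --- is exactly the paper's, and your observation $\sigma_{\min}(\Hess\Phi)\ge a_0/(C_d\mathcal{M}_2)^{d-1}$ is the paper's Point~2. The genuine gap is in Step~1.

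The sublevel bound $|\{\xi\in K:|\nabla\Phi(\xi)|\le t\}|\lesssim a_0^{-p}\,\mathrm{poly}(\mathcal{M})\,t^d$ is the right target, but neither ingredient you name proves it. The lower bound $|\nabla|\nabla\Phi||\ge c$ fed into the co-area formula for the scalar function $|\nabla\Phi|$ only yields $|\{|\nabla\Phi|\le t\}|\le c^{-1}\int_0^t\mathcal H^{d-1}(\{|\nabla\Phi|=s\})\,ds$, and nothing in the hypotheses bounds those level-surface measures. And the ``change of variables $\eta=\nabla\Phi(\xi)$'' is only an area formula: since $\nabla\Phi$ need not be injective on $V$, it gives
\[
a_0\,\big|\{|\nabla\Phi|\le t\}\big|\ \le\ \int_{\{|\nabla\Phi|\le t\}}|\det\Hess\Phi|\,d\xi\ =\ \int_{|\eta|\le t}\#\{\xi\in V:\nabla\Phi(\xi)=\eta\}\,d\eta,
\]
so you must bound the multiplicity of $\nabla\Phi$ --- which is exactly where the possibly many critical points re-enter. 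The paper does this by the very localisation you say you wish to avoid: on every ball of radius $\delta\sim a_0/\big(\mathcal{M}_3\mathcal{M}_2^{d-1}\big)$ the Taylor expansion together with the lower bound on $\sigma_{\min}(\Hess\Phi)$ forces $\nabla\Phi$ to be injective (Lemma~\ref{nablaPhi-inj}); one then runs your Steps~2--3 on each such ball, obtaining $a_0^{-1}\lambda^{-d/2}$ per ball, and sums over the at most $C_d\delta^{-d}\sim a_0^{-d}$ balls needed to cover $K$. That sum is the origin of the exponent $1+d$ in $a_0^{-(1+d)}$, and it is absent from your sketch.

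A second, smaller point: the paper does not obtain the explicit exponent $\tfrac d2+d^2$ on $\mathcal{M}_{d+2}$ by bookkeeping through the integration-by-parts step as you propose. It first proves the estimate with an unspecified nondecreasing $\mathcal{F}(\mathcal{M}_{d+2})$ in place of $1+\mathcal{M}_{d+2}^{d/2+d^2}$ (Theorem~\ref{PS2}), and then applies that weaker result to the pair $(t\lambda,\Phi/t)$ with $t=1+\mathcal{M}_{d+2}$: this rescaling sends $\mathcal{M}_{d+2}\mapsto\mathcal{M}_{d+2}/t\le1$ and $a_0\mapsto a_0/t^d$, and the exponent $\tfrac d2+d^2=d(1+d)-\tfrac d2$ drops out of $t^{\,d(1+d)}(t\lambda)^{-d/2}$. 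Your ``collecting the accumulated powers'' would have to be made precise to land on that exact value.
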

\begin{remas}
 
 1.  We notice that no upper bound (nor lower bound) on $\nabla \Phi$ is required. This is important in particular in the case were  the phase $\Phi$ depends on parameters. For instance, in some cases the phase $\Phi$  is of the form $\Phi(x,y, \xi) = (x-y)\cdot \xi + \phi(x,y,\xi)$ where $x,y $ are in $\xR^d$. In these case $\nabla \Phi =x-y + \nabla \phi$ and there is no natural upper nor lower bound for it.

   2.  Here is another example (see \cite{ABZ}). Assume  $\Phi(x,y, \xi) = (x-y)\cdot \xi + t \theta (x,y,\xi)$ where $t\in (0,T) $ and $x,y $ are in $\xR^d.$ Assume that $\Phi$ and $b$ satisfiy $(i), (ii)$ uniformly in $(t,x,y)$ and that $\vert \text{det Hess } \theta \vert \geq c>0 $ where $c$ depends only on the dimension $d$. Then setting $X = \frac{x}{t}, Y = \frac{y}{t}$ we   write $i\lambda \Phi = i \lambda t \big( (X-Y)\cdot \xi + \theta(t, tX,tY, \xi) \big)$ and we may apply Proposition \ref{PS0} with $a_0 =c$ and $\lambda$    replaced by $\lambda t.$ We obtain an estimate of $I(\lambda)$ by $t^{-\frac{d}{2}} \lambda ^{-\frac{d}{2}}$ as soon as $t \geq \lambda^{-1}.$
 
  3. The term $a_0^{1+d}$ in \eqref{estimation0} can be written as $a_0 a_0^d$ where the factor $a_0^d$ 
comes from the possible occurence of $a_0^{-d}$ critical points of the phase on the support of $b$. In the case where $\Phi$ has only one non degenerate critical point this term could be avoided. In this direction  we have the following result.
 \end{remas}
\begin{theo}\label{PS1}
Assume that $\Phi$ and $b$ satisfy the assumptions \eqref{hypE} and  that  the map  
\begin{equation}\label{inject}
 V  \to \xR^d, \quad  \xi \mapsto \nabla \Phi(\xi),\quad \text{ is injective}.
 \end{equation}

Then one can find   $C>0$ depending only on the dimension $d$ such that
\begin{equation}\label{estimation1}
\vert I(\lambda) \vert \leq  \frac{C}{a_0 }  \big(1+\mathcal{M}_{d+2}^{\frac{d}{2}}\big)\mathcal{N}_{d+1}\lambda^{-\frac{d}{2}}.
\end{equation}
 \end{theo}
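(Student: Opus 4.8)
The plan is to deduce Theorem \ref{PS1} from (the proof of) Theorem \ref{PS0}, exploiting the injectivity hypothesis \eqref{inject} to kill the factor $a_0^d$ coming from the count of critical points. The starting point is the same: decompose $I(\lambda)$ into a sum of localized pieces $I_j(\lambda) = \int e^{i\lambda\Phi}\chi_j b\,d\xi$ using a partition of unity subordinate to a covering of $K$ by balls of a fixed radius $\rho$ (depending only on $d$ and on the bounds $\mathcal{M}_{d+2}$, $a_0$), chosen small enough that on each ball either $\nabla\Phi$ does not vanish, in which case one integrates by parts with the operator $L = \frac{\nabla\Phi\cdot\nabla}{i\lambda|\nabla\Phi|^2}$ and gains $\lambda^{-N}$, or $\nabla\Phi$ has a (necessarily unique, by \eqref{inject}) zero $\xi_j$ in a slightly larger ball, near which $|\nabla\Phi(\xi)|\gtrsim a_0|\xi-\xi_j|$ by $(iii)$, and a dyadic-in-$|\xi-\xi_j|$ decomposition combined with the same integration by parts and the elementary one-ball stationary phase estimate yields the bound $\le C a_0^{-1}(1+\mathcal{M}_{d+2}^{d/2})\mathcal{N}_{d+1}\lambda^{-d/2}$ for that single piece (this is exactly the per-critical-point estimate already obtained in proving Theorem \ref{PS0}).

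The key point where \eqref{inject} enters is the bookkeeping of how many pieces $I_j$ carry a critical point. Without injectivity one can only say that there are at most $O(a_0^{-d})$ critical points in $K$ — one sums $O(a_0^{-d})$ contributions of size $a_0^{-1}(\dots)$ and recovers the $a_0^{-(1+d)}$ of \eqref{estimation0}. With \eqref{inject} the map $\xi\mapsto\nabla\Phi(\xi)$ is a global diffeomorphism from $V$ onto its image, so there is at most \emph{one} critical point $\xi_\star$ of $\Phi$ on all of $V$. Hence only finitely many (in fact $O(1)$, depending only on $d$) of the localizing balls meet a fixed small neighborhood of $\xi_\star$; all the other pieces are genuinely non-stationary and contribute $O(\lambda^{-N})\le O(\lambda^{-d/2})$ with constants controlled by $\mathcal{M}$, $\mathcal{N}$ and the (absolute) number of balls. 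Summing: one stationary piece of size $C a_0^{-1}(1+\mathcal{M}_{d+2}^{d/2})\mathcal{N}_{d+1}\lambda^{-d/2}$ plus $O(1)$ non-stationary pieces of comparable or smaller size, giving \eqref{estimation1}.

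Two technical points need care. First, the covering: the radius $\rho$ must be picked using $(iii)$ and $\mathcal{M}_{d+2}$ so that on a ball of radius $2\rho$ around any point where $\nabla\Phi$ is small, the Hessian lower bound $(iii)$ propagates to a genuine lower bound $|\nabla\Phi(\xi)|\ge \frac{a_0}{2}|\xi-\xi_\star|$; this is the standard argument via the mean value theorem and Cramer's rule, $\nabla\Phi(\xi) = \int_0^1 \mathrm{Hess}\,\Phi(\xi_\star + t(\xi-\xi_\star))(\xi-\xi_\star)\,dt$, using that $\mathrm{Hess}\,\Phi$ stays close to $\mathrm{Hess}\,\Phi(\xi_\star)$ on a small ball. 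One must check that the resulting number of balls needed to cover the region where this argument is run near $\xi_\star$ depends only on $d$ — it does, since that region has diameter $O(\rho)$ and the balls have radius $\rho$. Second, one must verify that outside a fixed $O(\rho)$-neighborhood of $\xi_\star$ we indeed have $|\nabla\Phi|\gtrsim a_0\rho$ so the non-stationary estimate applies with $N$ derivatives; here the injectivity is used again to rule out $\nabla\Phi$ returning to small values far from $\xi_\star$.

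The main obstacle is the last point: establishing a \emph{uniform} lower bound on $|\nabla\Phi|$ away from the single critical point, quantitatively in terms of $a_0$ and $\mathcal{M}_{d+2}$ only, using injectivity. Injectivity alone is qualitative; one has to combine it with $(iii)$, which makes $\xi\mapsto\nabla\Phi$ a local diffeomorphism with controlled distortion, to turn "nonzero" into "$\gtrsim a_0\,\dist(\xi,\xi_\star)$" on a ball of radius $\gtrsim a_0/\mathcal{M}_{d+2}$, and then just observe that on the annulus between that ball and $V$ the bound persists because the image under a global diffeomorphism of the complement of a ball stays at positive distance from $\nabla\Phi(\xi_\star)=0$. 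Once this is in hand, the summation over the $O_d(1)$ pieces is routine and \eqref{estimation1} follows.
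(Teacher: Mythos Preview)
Your approach has a genuine gap, and it is precisely the one you flag yourself as ``the main obstacle'': you need, outside a ball of radius $\rho\sim a_0/\mathcal{M}$ around the (at most one) critical point $\xi_\star$, a lower bound $|\nabla\Phi(\xi)|\gtrsim a_0\rho$ with implicit constant depending only on $d$. Your justification --- ``the image under a global diffeomorphism of the complement of a ball stays at positive distance from $0$'' --- is qualitative. Injectivity together with $(iii)$ tells you this distance is strictly positive, but gives no control on \emph{how} positive in terms of $a_0,\mathcal{M}_{d+2},d$ alone. Nothing in the hypotheses prevents $\nabla\Phi$ from taking values of size $\varepsilon$ (arbitrarily small but nonzero) on some portion of $K$ far from $\xi_\star$, while keeping $|\det\text{Hess}\,\Phi|\ge a_0$ and $\mathcal{M}_{d+2}$ bounded; the local distortion bound coming from \eqref{H} only propagates the lower bound along paths of length $O(a_0/\mathcal{M})$, not globally. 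Without that quantitative lower bound the constants in your non-stationary pieces (which carry factors $|\nabla\Phi|^{-k}$ and $\rho^{-|\alpha|}$) are uncontrolled, and in any case summing over the $O(\rho^{-d})\sim O(a_0^{-d})$ non-stationary balls would reintroduce exactly the $a_0^{-d}$ you are trying to avoid.

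The paper's argument sidesteps all of this by \emph{not} localizing at all in the injective case. The point of the hypothesis \eqref{inject} is not that there is at most one critical point --- it is that $\eta=\nabla\Phi(\xi)$ is a \emph{global} change of variables on $V$, with Jacobian $\ge a_0$ by $(iii)$. One writes directly $I(\lambda)=K(\lambda)+L(\lambda)$ with the cutoff $\psi(\lambda^{1/2}|\nabla\Phi|)$. For $K(\lambda)$ the substitution $\eta=\lambda^{1/2}\nabla\Phi(\xi)$ gives $|K(\lambda)|\le C_d a_0^{-1}\mathcal{N}_0\lambda^{-d/2}$ immediately. For $L(\lambda)$ one integrates by parts $N=d+1$ times with $X=\frac{1}{i\lambda}\frac{\nabla\Phi}{|\nabla\Phi|^2}\cdot\nabla$, and then the \emph{same} global change of variables $\eta=\nabla\Phi(\xi)$ turns all the singular factors $|\nabla\Phi|^{-k}$ into explicit convergent integrals $\int_{|\eta|\ge\lambda^{-1/2}}|\eta|^{-k}\,d\eta$. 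No lower bound on $|\nabla\Phi|$ is ever needed, because after the substitution the smallness of $|\nabla\Phi|$ becomes smallness of the \emph{measure} of the region $\{|\eta|\text{ small}\}$. This yields \eqref{estimation3}, and the scaling $\Phi\mapsto\Phi/t$, $\lambda\mapsto t\lambda$ with $t=1+\mathcal{M}_{d+2}$ upgrades it to \eqref{estimation1}.
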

 Here are  two examples where Theorem \ref{PS1} appplies.
 \begin{exs}
 1. Assume besides \eqref{hypE} that
 \begin{equation}\label{defpos}
  \langle \text{Hess }\Phi(\xi)X,X\rangle >0, \quad \forall \xi \in V, \quad  \forall X \in \xR^d.
  \end{equation}
 then \eqref{inject} is satisfied.

 For simplicity we shall assume that the neighborhood $V$ of $\text{supp }b$ appearing in \eqref{hypE} $(i)$ is convex. First of all, since the symmetric matrix $\text{Hess } \Phi $ is a non negative,  its eigenvalues are non negative. It follows from the hypothesis  \eqref{hypE} $(iii)$ (see \eqref{vp>}) that 
  \begin{equation}\label{defpos}
   \langle \text{Hess } \Phi(\xi) X,X \rangle \geq \frac{a_0}{(C_dM_2)^{d-1}}\vert X \vert^2,\quad \forall \xi \in V, \quad \forall X \in \xR^d.
   \end{equation}
With $\xi, \eta \in V$  we   write
\begin{align*}
 \nabla \Phi(\xi) - \nabla \Phi(\eta) &= \int_0^1\frac{d}{ds}\big[ \nabla \Phi (s \xi + (1-s)\eta)\big] \, ds,\\
 &= \int_0^1\text{Hess }\Phi(s \xi + (1-s)\eta) \cdot (\xi-\eta)\, ds.
 \end{align*}
 It follows from   \eqref{defpos} that
 $$\langle \nabla \Phi(\xi) - \nabla \Phi(\eta), \xi-\eta \rangle \geq  \frac{a_0}{(C_dM_2)^{d-1}} \vert \xi -\eta \vert^2 $$
from which we deduce that
$$ \frac{a_0}{(C_dM_2)^{d-1}}  \vert \xi -\eta \vert \leq \vert  \nabla \Phi(\xi) - \nabla \Phi(\eta) \vert $$
which completes the proof.

2. Let  $A$ be a real, symmetric, non singular $d \times d$ matrix and $\Psi$ be a smooth phase such that $\mathcal{M}_{d+2} (\Psi)<+\infty$. Set $\Phi(\xi) = \mez \langle A\xi,\xi \rangle + \eps \Psi(\xi).$ Then if $\eps$ is small enough  the assumptions in Theorem \ref{PS1} are satisfied.   \end{exs}
\begin{rema}
Notice that the estimates \eqref{estimation0}, \eqref{estimation1} do not seem to be optimal with respect to the power of $a_0$ since according to the usual stationnary phase method one could expect to have  $a_0^{-\mez}$ in the right hand side.
\end{rema}

Actually  it is sufficient to prove the following weaker inequality.

\begin{theo}\label{PS2}
1. Under the hypotheses of Theorem \ref{PS0} there exists $\mathcal{F}:\xR^+ \to \xR^+$ non decreasing   such that for every $\lambda\geq 1$
\begin{equation}\label{estimation2}
 \vert I(\lambda) \vert \leq \mathcal{F}(\mathcal{M}_{d+2}) \mathcal{N}_{d+1} \frac{1}{a_0^{1+d}}\lambda^{-\frac{d}{2}}.
 \end{equation}
 2. Under the hypotheses of Theorem \ref{PS1}
 there exists $\mathcal{F}:\xR^+ \to \xR^+$ non decreasing   such that for every $\lambda\geq 1$
\begin{equation}\label{estimation3}
 \vert I(\lambda) \vert \leq \mathcal{F}(\mathcal{M}_{d+2}) \mathcal{N}_{d+1} \frac{1}{a_0}\lambda^{-\frac{d}{2}}.
 \end{equation}

\end{theo}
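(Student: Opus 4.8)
The plan is to prove both parts of Theorem~\ref{PS2} simultaneously, by a quantitative form of the classical stationary phase scheme organized around a single dyadic decomposition of the integration domain according to the size of $\nabla\Phi$, at the natural scale $\lambda^{-1/2}$; the injectivity hypothesis \eqref{inject} will be used at exactly one step, and it is precisely there that one saves $d$ powers of $1/a_0$. First I would set $\delta=\lambda^{-1/2}$, fix a partition of unity $1=\sum_{j\ge 0}\psi_j$ on $[0,+\infty)$ with $\psi_0$ supported in $[0,4]$ and $\psi_j$ supported in $[4^{j-1},4^{j+1}]$ for $j\ge 1$, and write
\begin{equation*}
I(\lambda)=\sum_{j\ge0}I_j,\qquad I_j=\int_{\mathbf{R}^d}e^{i\lambda\Phi}\,b\,\chi_j\,d\xi,\qquad \chi_j(\xi)=\psi_j\!\Big(\tfrac{|\nabla\Phi(\xi)|^2}{\delta^2}\Big),
\end{equation*}
so that $\operatorname{supp}\chi_0\subset\{|\nabla\Phi|\le 2\delta\}$ and $\operatorname{supp}\chi_j\subset\{2^{j-1}\delta\le|\nabla\Phi|\le 2^{j+1}\delta\}$ for $j\ge1$. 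Since $\delta\le1$, hypothesis \eqref{hypE}$(i)$ gives $|D^\alpha\chi_j|\le C_d\,P(\mathcal{M}_{d+2})\,(2^j\delta)^{-|\alpha|}$ for $|\alpha|\le d+1$, with $P$ a polynomial with non-negative coefficients; this is the only place derivatives of $\Phi$ of order up to $d+2$ are needed.

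For the core piece I would use the trivial bound $|I_0|\le\mathcal{N}_0\,\mathrm{meas}\big(K\cap\{|\nabla\Phi|\le 2\delta\}\big)$. For $j\ge1$, on $\operatorname{supp}(b\chi_j)$ one has $|\nabla\Phi|\sim\rho_j:=2^j\delta$ and $\lambda|\nabla\Phi|^2\ge 4^{j-1}\ge1$, so I can integrate by parts $N:=d+1$ times with $L=(i\lambda)^{-1}|\nabla\Phi|^{-2}\,\nabla\Phi\cdot\nabla$. Tracking the $\lambda^{-1}$, the powers $|\nabla\Phi|^{-2}\sim\rho_j^{-2}$, and the $\rho_j^{-1}$'s produced whenever a derivative falls on $\nabla\Phi/|\nabla\Phi|^2$ or on $b\chi_j$, one gets on the support $|(L^{t})^{N}(b\chi_j)|\le C_d\,P(\mathcal{M}_{d+2})\,\mathcal{N}_{d+1}\,\lambda^{-N}\big(\rho_j^{-N}+\rho_j^{-2N}\big)$ (the first term dominating when $\rho_j\ge1$, the second when $\rho_j\le1$), whence
\begin{equation*}
|I_j|\le C_d\,P(\mathcal{M}_{d+2})\,\mathcal{N}_{d+1}\,\lambda^{-N}\big(\rho_j^{-N}+\rho_j^{-2N}\big)\,\mathrm{meas}\big(K\cap\{|\nabla\Phi|\le 2\rho_j\}\big).
\end{equation*}
The whole estimate is thereby reduced to a sublevel-set bound for $\nabla\Phi$ of the shape
\begin{equation*}
\mathrm{meas}\big(K\cap\{|\nabla\Phi|\le r\}\big)\le C_d\,P(\mathcal{M}_{d+2})\,a_0^{-p}\,r^{d}\qquad(r>0),\tag{$\star$}
\end{equation*}
with $p=d+1$ in general and $p=1$ when \eqref{inject} holds. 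Granting $(\star)$: the core term is $\le C_d\mathcal{N}_0 a_0^{-p}\lambda^{-d/2}$, while $\sum_{j\ge1}|I_j|\le C_d\,P(\mathcal{M}_{d+2})\,\mathcal{N}_{d+1}\,a_0^{-p}\,\lambda^{-N}\sum_{j\ge1}\big(\rho_j^{d-N}+\rho_j^{d-2N}\big)$; the choice $N=d+1$ makes both exponents $d-N$ and $d-2N$ negative, so the geometric series converge and a short computation (using $\rho_j=2^j\lambda^{-1/2}$ and $\lambda\ge1$) gives $\sum_{j\ge1}|I_j|\le C_d\,P(\mathcal{M}_{d+2})\,\mathcal{N}_{d+1}\,a_0^{-p}\lambda^{-d/2}$. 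Adding up and taking $\mathcal{F}(t)=C_d(1+P(t))$, non-decreasing, yields \eqref{estimation2} (case $p=d+1$) and \eqref{estimation3} (case $p=1$). Note that $N=d+1$ is dictated by the far dyadic pieces $\rho_j\gtrsim1$, where $b\chi_j$ carries no extra smallness; this is exactly why the hypotheses need $\mathcal{N}_{d+1}$ and — since $(L^t)^{d+1}$ reaches derivatives of $\Phi$ of order $d+2$ — $\mathcal{M}_{d+2}$.

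It remains to prove $(\star)$. Under \eqref{inject} this is immediate and gives $p=1$: $\nabla\Phi$ is injective on $V$ with Jacobian $|\det\Hess\Phi|\ge a_0$, so the change of variables $\xi\mapsto\nabla\Phi(\xi)$ yields $\mathrm{meas}(\{|\nabla\Phi|\le r\})\le a_0^{-1}\,\mathrm{meas}(\{|y|\le r\})=C_d\,a_0^{-1}r^d$, and this is the only step where \eqref{inject} is used. In the general case I would first observe that, by \eqref{hypE}$(i),(iii)$, every eigenvalue of $\Hess\Phi(\xi)$ has absolute value at least $\mu_0:=c_d\,a_0\,\mathcal{M}_2^{-(d-1)}$, and that the quantitative inverse function theorem then shows $\nabla\Phi$ to be injective on every ball of radius $r_0\sim\mu_0/\mathcal{M}_3$ contained in $V$, with $|\nabla\Phi(\xi)-\nabla\Phi(\xi')|\ge\tfrac12\mu_0|\xi-\xi'|$ there. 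By the coarea (change of variables) formula, $(\star)$ is then equivalent to a uniform bound — depending only on $d$ and $\mathcal{M}_{d+2}$, \emph{not} on the size of $K$ — on $\sup_y\#\big((\nabla\Phi)^{-1}(y)\cap V\big)$, i.e.\ on the number of (necessarily nondegenerate) critical points of $\xi\mapsto\Phi(\xi)-y\cdot\xi$ meeting $\operatorname{supp}b$; this multiplicity bound is what feeds the extra factor $a_0^{-d}$ into \eqref{estimation0}. Proving it — using that distinct such critical points are $\ge r_0$ apart while $\Hess\Phi$ stays bounded with a locally constant signature — is the step I expect to be the main obstacle; everything else is routine bookkeeping, and the passage from the non-explicit $\mathcal{F}$ of Theorem~\ref{PS2} to the polynomial bounds of Theorems~\ref{PS0} and~\ref{PS1} is obtained afterwards by a rescaling $\xi\mapsto\mu\xi$ with $\mu$ chosen to normalize the Hessian bounds.
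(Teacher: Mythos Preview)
Your overall scheme is sound and close in spirit to the paper's, but organized differently: the paper first localizes $b$ in physical space to balls of radius $\delta\sim a_0/\mathcal{F}(\mathcal{M}_3)$ on which $\nabla\Phi$ is injective (Lemma~\ref{nablaPhi-inj}), then makes a single cut at $|\nabla\Phi|\sim\lambda^{-1/2}$ and performs the change of variables $\eta=\nabla\Phi(\xi)$ directly on each ball; you instead decompose dyadically in $|\nabla\Phi|$ and reduce everything to the sublevel-set estimate~$(\star)$. Either route works; yours isolates the role of $a_0$ cleanly in a single inequality, while the paper's avoids the dyadic bookkeeping at the price of carrying the spatial partition of unity through the integration by parts (whence the preliminary reduction $\lambda^{1/2}a_0\ge 1$ in \eqref{reduc1}, which you do not need).

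The genuine gap is exactly where you flag it: you have not proved $(\star)$ with $p=d+1$, and the hoped-for route via the signature of $\Hess\Phi$ will not give a multiplicity bound independent of the size of $K$. In fact no such bound is available under the sole hypotheses \eqref{hypE}, and the paper does not claim one either: it simply covers $K$ by $J\le C_d\,\delta^{-d}$ balls of radius $\delta$ (see \eqref{delta}--\eqref{derchij}), uses the local injectivity on each, and sums; this is what produces the extra factor $a_0^{-d}$. Translated into your language, the local injectivity you already established on balls of radius $r_0\sim a_0/\mathcal{F}(\mathcal{M}_3)$ immediately gives, via the same covering, $\mathrm{meas}(K\cap\{|\nabla\Phi|\le r\})\le C_d\,(\mathrm{diam}\,K)^d\,r_0^{-d}\,a_0^{-1}r^d$, hence $(\star)$ with $p=d+1$ and a harmless $(\mathrm{diam}\,K)^d$ absorbed into the constant. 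Once you accept this (as the paper tacitly does), your argument closes with no further obstacle; abandon the signature idea.

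Two minor remarks. First, your claim that the bound on $D^\alpha\chi_j$ is ``the only place'' where $\mathcal{M}_{d+2}$ enters is not quite right: $(L^t)^{d+1}$ contains coefficients involving up to $d+1$ derivatives of $A=\nabla\Phi/|\nabla\Phi|^2$, hence up to $d+2$ derivatives of $\Phi$ as well (compare \eqref{Ltranspose}). Second, the passage from Theorem~\ref{PS2} to Theorems~\ref{PS0}--\ref{PS1} in the paper is not a rescaling $\xi\mapsto\mu\xi$ but rather $(\lambda,\Phi)\mapsto(t\lambda,\Phi/t)$ with $t=1+\mathcal{M}_{d+2}$, which normalizes $\mathcal{M}_{d+2}(\Phi/t)\le 1$ while rescaling $a_0\mapsto a_0/t^d$; this is what turns the abstract $\mathcal{F}$ into the explicit polynomial powers.
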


\begin{proof}[Proof of Theorems \ref{PS0}  and \ref{PS1} given Theorem~\ref{PS2}]

We assume that \eqref{estimation1} is proved and our goal is to deduce that \e{estimation0} holds with 
$C=\mathcal{F}(1)$.  Set $t=  1+ \mathcal{M}_{d+2}  $ and consider $\lambda\ge 1$. Since $t\lambda\ge 1$ we can apply 
 \eqref{estimation2} with $(\lambda,\Phi)$ replaced with $(t\lambda,\Phi(\xi)/t)$ to deduce that
\begin{align*}
\la\int_{\xR^d} e^{i \lambda \Phi(\xi)} b(\xi)\, d\xi\ra
&=\la \int_{\xR^d} e^{i t\lambda \frac{\Phi(\xi)}{t}} b(\xi)\, d\xi\ra\\
&\le 
\mathcal{F}\left(\mathcal{M}_{d+2}\left(\frac{\Phi}{t}\right)\right) \mathcal{N}_{d+1} \frac{1}{(a_0/t^d)^{1+d}}(t\lambda)^{-\frac{d}{2}}\\
&\le \mathcal{F}\left(\frac{\mathcal{M}_{d+2}(\Phi)}{t}\right) \mathcal{N}_{d+1} \frac{1}{a_0^{1+d}}\lambda^{-\frac{d}{2}}t^{\frac{d}{2}+d^2},
\end{align*}
which yields the wanted estimate. The case $2.$ is analogue.
\end{proof}
 
We are left with the proof of Theorem \ref{PS2}. We begin by some preliminaries.
 \subsection{Preliminaries}
  In that follows we shall denote by $C_d$ a positive constant depending only on the dimension $d$ and by $\mathcal{F}$ a non decreasing function from $\xR^+$ to $\xR^+$ which  can change from line to line.

Point 1. First of all we may assume that
\begin{equation}\label{reduc1}
\lambda^\mez a_0 \geq 1.
\end{equation}
Indeed if $\lambda^\mez a_0 \leq 1$ then $1   \leq (\lambda^\mez a_0)^{-d} $ and we write 
$$\vert I(\lambda)\vert \leq \Vert b \Vert_{L^1(\xR^d)} \leq \Vert b \Vert_{L^1(\xR^d)} \frac{a_0}{a_0}\frac{1}{(\lambda^\mez a_0)^{ d}}  \leq  C \mathcal{M}_2^d\Vert b \Vert_{L^1(\xR^d)} \frac{1}{a_0^{1+d}}\lambda^{-\frac{d}{2}} $$
since by \eqref{hypE}, $(iii)$ we have $a_0 \leq C \mathcal{M}_2^d.$

 Point 2.   Set $H = \text{Hess} \Phi.$ By $(i)$ the eingenvalues $(\lambda_j)_{j= 1, \ldots,d}$ of $H$ are bounded by $C_d\mathcal{M}_2$. It follows from   $(iii)$   that  
 \begin{equation}\label{vp>}
  \vert \lambda_j \vert \geq   \frac{a_0}{(C_d \mathcal{M}_2)^{d-1}}, \quad 1 \leq j \leq d. 
  \end{equation}  Therefore
        \begin{equation}\label{H} 
   \vert H(\xi) X \vert \geq  \frac{a_0}{(C_d \mathcal{M}_2)^{d-1}} \vert X \vert, \quad\forall \xi \in V,\quad  \forall X \in \xR^d.
   \end{equation}
   We shall use the Taylor formula
   \begin{equation}\label{Taylor}
     \nabla \Phi(\xi)  = \nabla \Phi(\eta) + \text{Hess }\Phi(\eta)(\xi-\eta) + R, \quad 
   \vert R \vert  \leq C'_d\mathcal{M}_3 \vert \xi - \eta \vert^2.
  \end{equation}
 \begin{lemm}\label{nablaPhi-inj}
   Let $\delta>0$ be defined by
 \begin{equation}\label{delta}
\delta = \frac{a_0}{12C'_d \mathcal{M}_3(C_d\mathcal{M}_2)^{d-1}}.
\end{equation}
  where $C_d, C'_d$ have been defined above.   Let $\xi^* \in \text{supp }b$  
 Then, 
\begin{equation}\label{injective}
\text{ on the ball } B(\xi^*, \delta) \text{ the map }\xi \mapsto \nabla \Phi(\xi) \text{ is injective}.
\end{equation}
\end{lemm}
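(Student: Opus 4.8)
The plan is to prove injectivity of $\xi\mapsto\nabla\Phi(\xi)$ on the ball $B(\xi^*,\delta)$ by a standard quantitative argument: show that for two distinct points $\xi,\eta$ in the ball the difference $\nabla\Phi(\xi)-\nabla\Phi(\eta)$ cannot vanish, by writing the Taylor expansion and showing the Hessian term dominates the remainder. Concretely, first I would take $\xi,\eta\in B(\xi^*,\delta)$ with $\xi\neq\eta$ and apply the Taylor formula \eqref{Taylor} expanded at $\eta$, namely
\[
\nabla\Phi(\xi)-\nabla\Phi(\eta)=\text{Hess }\Phi(\eta)(\xi-\eta)+R,\qquad \vert R\vert\le C'_d\mathcal{M}_3\vert\xi-\eta\vert^2.
\]
From the lower bound \eqref{H} on the Hessian we get $\vert\text{Hess }\Phi(\eta)(\xi-\eta)\vert\ge \frac{a_0}{(C_d\mathcal{M}_2)^{d-1}}\vert\xi-\eta\vert$.

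Next I would use that both $\xi$ and $\eta$ lie in $B(\xi^*,\delta)$, so $\vert\xi-\eta\vert\le 2\delta$, hence $\vert R\vert\le C'_d\mathcal{M}_3\cdot 2\delta\cdot\vert\xi-\eta\vert$. Plugging in the definition \eqref{delta} of $\delta$ gives
\[
\vert R\vert\le C'_d\mathcal{M}_3\cdot\frac{2a_0}{12C'_d\mathcal{M}_3(C_d\mathcal{M}_2)^{d-1}}\cdot\vert\xi-\eta\vert=\frac{a_0}{6(C_d\mathcal{M}_2)^{d-1}}\vert\xi-\eta\vert,
\]
which is at most half of the lower bound on the Hessian term. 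Therefore, by the triangle inequality,
\[
\vert\nabla\Phi(\xi)-\nabla\Phi(\eta)\vert\ge\frac{a_0}{(C_d\mathcal{M}_2)^{d-1}}\vert\xi-\eta\vert-\vert R\vert\ge\frac{a_0}{2(C_d\mathcal{M}_2)^{d-1}}\vert\xi-\eta\vert>0,
\]
so $\nabla\Phi(\xi)\neq\nabla\Phi(\eta)$, proving injectivity. One small point to be careful about: the ball $B(\xi^*,\delta)$ must be contained in $V$ for \eqref{H} and \eqref{Taylor} to apply along the segment $[\eta,\xi]$; this is ensured by taking $V$ a sufficiently large (but still "small") neighborhood of $K=\text{supp }b$, or by noting $\delta$ can be shrunk, and I would remark on this.

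I do not expect a serious obstacle here — this is the routine "Hessian controls the remainder on a small ball" lemma. The only mild subtlety is bookkeeping the constants so that the numerical factor $12$ in \eqref{delta} indeed produces a clean factor $\le\mez$ after accounting for $\vert\xi-\eta\vert\le2\delta$ (the $2$ from the diameter and a further factor $3$ of slack, giving the $12=2\cdot 6$ with room to spare), and making sure the convexity/containment of the segment in $V$ is legitimate. Once injectivity on balls of the fixed radius $\delta$ is in hand, the global integral $I(\lambda)$ can be handled by a partition of unity subordinate to a cover of $\text{supp }b$ by such balls, which is presumably the next step in the paper.
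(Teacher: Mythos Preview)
Your proof is correct and follows essentially the same approach as the paper: Taylor expand $\nabla\Phi$ at $\eta$, use the Hessian lower bound \eqref{H} for the linear term, bound the remainder using $\vert\xi-\eta\vert\le 2\delta$ together with the definition of $\delta$, and conclude by the triangle inequality. The paper carries through the arithmetic slightly more sharply, obtaining the constant $\tfrac{5a_0}{6(C_d\mathcal{M}_2)^{d-1}}$ rather than your $\tfrac{a_0}{2(C_d\mathcal{M}_2)^{d-1}}$ (since $1-\tfrac{1}{6}=\tfrac{5}{6}$), but this is immaterial for injectivity.
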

\begin{proof}
Indeed by \eqref{Taylor} and \eqref{H} if $\xi, \eta \in B(\xi^*, \delta)$ we can write
$$\vert \nabla \Phi(\xi) - \nabla \Phi(\eta)\vert \geq  \Big(\frac{a_0}{(C_d \mathcal{M}_2)^{d-1}}  -  \frac{ a_0}{6  (C_d\mathcal{M}_2)^{d-1}}\Big)\vert \xi - \eta \vert \geq \frac{5 a_0}{6(C_d \mathcal{M}_2)^{d-1}}\vert \xi -\eta \vert .$$
\end{proof}
  Let  $(\xi_j^*)_{ j = 1, \ldots,J} \subset  \text{supp }b,$ such that $\text{supp }b \subset \cup_{j=1}^J B(\xi_j^*, \delta).$ Taking a partition of unity $(\chi_j)$  and setting $b_j = \chi_j b$ we have 
 \begin{equation}\label{Ij0}
 I(\lambda) = \sum_{j=1}^J I_j(\lambda), \quad I_j(\lambda) = \int_{\xR^d} e^{i \lambda \Phi(\xi)} b_j(\xi)\, d\xi.
 \end{equation}
Notice that $\chi_j$ can be taken of the form $\chi_0\big(\frac{\xi-\xi_j^*}{\delta}\big)$ so that
\begin{equation}\label{derchij}
\vert \partial_\xi^\alpha \chi_j(\xi) \vert \leq C \delta^{-\vert \alpha \vert}.
\end{equation}

 Notice also that $J  \leq C_d\delta^{-d}.$  
  \begin{lemm}
  Let $i\in \{1, \ldots,d\}$ and $A_i = \frac{\partial_i \Phi}{\vert \nabla \Phi \vert^2}.$ One can find   $\mathcal{F}: \xR^+ \to \xR^+$ non decreasing such that
\begin{equation}\label{est:Ai}
\vert D_\xi^\alpha A_i (\xi)\vert \leq \mathcal{F}(M_{1+ \vert \alpha \vert}) \sum_{k=2}^{1+ \vert \alpha \vert} \frac{1}{\vert \nabla \Phi(\xi)\vert^k}, \quad \vert \alpha \vert \geq 1.
\end{equation}
\end{lemm}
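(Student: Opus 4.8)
The plan is to prove the estimate \eqref{est:Ai} on the derivatives of $A_i = \partial_i\Phi/\vert\nabla\Phi\vert^2$ by the Leibniz rule together with the Fa\`a di Bruno formula, keeping careful track of which quantities are controlled by the $\mathcal M_k$'s. The key structural observation is that $A_i$ is a rational expression in the first derivatives of $\Phi$: numerator $\partial_i\Phi$ and denominator $g:=\vert\nabla\Phi\vert^2=\sum_{j}(\partial_j\Phi)^2$. The subtlety is that $\nabla\Phi$ itself is \emph{not} controlled by any of the $\mathcal M_k$'s (only second and higher derivatives of $\Phi$ are), so every factor of $\partial_j\Phi$ that appears must either be cancelled or estimated by $\vert\nabla\Phi\vert$, and that is exactly what produces the negative powers $\vert\nabla\Phi\vert^{-k}$ on the right-hand side.

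First I would record the elementary bound $\vert\partial^\beta_\xi g\vert\le \mathcal F(\mathcal M_{1+\vert\beta\vert})\,\vert\nabla\Phi\vert^{\max(0,\,2-\vert\beta\vert)}$ for $\vert\beta\vert\ge 1$: indeed $\partial^\beta g$ is a sum of terms $\partial^{\gamma_1}\partial_j\Phi\cdot\partial^{\gamma_2}\partial_j\Phi$ with $\gamma_1+\gamma_2=\beta$; if $\vert\beta\vert\ge 2$ we may split the derivatives so that each factor carries at least one derivative, hence is a second-or-higher derivative of $\Phi$ and is bounded by $\mathcal M_{1+\vert\beta\vert}$, while if $\vert\beta\vert=1$ one factor is $\partial_j\Phi$ itself, whence the single surviving power of $\vert\nabla\Phi\vert$. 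Next, by the chain rule applied to $t\mapsto t^{-1}$, the derivatives of $g^{-1}$ satisfy
\begin{equation*}
\vert\partial^\alpha_\xi (g^{-1})\vert \le \mathcal F(\mathcal M_{1+\vert\alpha\vert})\sum_{\ell=1}^{\vert\alpha\vert} g^{-1-\ell}\sum_{\substack{\beta_1+\cdots+\beta_\ell=\alpha\\ \vert\beta_m\vert\ge 1}}\prod_{m=1}^{\ell}\vert\partial^{\beta_m}g\vert,
\end{equation*}
and inserting the bound on $\partial^{\beta_m}g$ together with $g=\vert\nabla\Phi\vert^2$ shows, after counting powers, that $\vert\partial^\alpha_\xi(g^{-1})\vert\le \mathcal F(\mathcal M_{1+\vert\alpha\vert})\sum_{k=2}^{2+\vert\alpha\vert}\vert\nabla\Phi\vert^{-k}$ (the worst case being when every $\beta_m$ has length $1$, giving $g^{-1-\ell}\vert\nabla\Phi\vert^{\ell}=\vert\nabla\Phi\vert^{-2-\ell}$ with $\ell$ up to $\vert\alpha\vert$). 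Finally the Leibniz rule $\partial^\alpha A_i=\sum_{\alpha'+\alpha''=\alpha}\binom{\alpha}{\alpha'}\partial^{\alpha'}(\partial_i\Phi)\,\partial^{\alpha''}(g^{-1})$ combines these: when $\alpha'\neq 0$ the factor $\partial^{\alpha'}\partial_i\Phi$ is a second-or-higher derivative, bounded by $\mathcal M_{1+\vert\alpha\vert}$, so that term contributes powers $\vert\nabla\Phi\vert^{-k}$ with $2\le k\le 2+\vert\alpha''\vert\le 1+\vert\alpha\vert$; when $\alpha'=0$ the surviving $\partial_i\Phi$ is absorbed, $\vert\partial_i\Phi\vert\le\vert\nabla\Phi\vert$, improving $\vert\nabla\Phi\vert^{-k}$ to $\vert\nabla\Phi\vert^{-k+1}$, and since here $\vert\alpha''\vert=\vert\alpha\vert$ this again lands in the range $k\le 1+\vert\alpha\vert$. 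Collecting all terms gives \eqref{est:Ai}.

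The only real bookkeeping obstacle is the power count: one must verify that the homogeneity works out so that the top negative power is $1+\vert\alpha\vert$ and not $2+\vert\alpha\vert$, which relies precisely on the cancellation in the $\alpha'=0$ term (where $\partial_i\Phi$ kills one power of $\vert\nabla\Phi\vert^{-1}$) and on the fact that in the $\alpha'\neq 0$ terms the length available for $g^{-1}$ drops to $\vert\alpha''\vert\le\vert\alpha\vert-1$. Everything else is a mechanical application of Leibniz and Fa\`a di Bruno, with all Schur-type constants swept into the non-decreasing function $\mathcal F$ evaluated at $\mathcal M_{1+\vert\alpha\vert}$.
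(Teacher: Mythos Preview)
Your route---Leibniz on $A_i=(\partial_i\Phi)\,g^{-1}$ with $g=|\nabla\Phi|^2$, together with Fa\`a di Bruno on $g^{-1}$---is different from the paper's, which proceeds by induction on $|\alpha|$: it differentiates the identity $|\nabla\Phi|^2 A_i=\partial_i\Phi$ via Leibniz and solves for $D^\gamma A_i$. The paper's induction avoids Fa\`a di Bruno entirely and keeps the bookkeeping lighter; your approach is more explicit and in principle also works, but two slips need fixing.

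First, your bound on $\partial^\beta g$ for $|\beta|\ge 2$ is wrong as stated: you cannot ``split the derivatives so that each factor carries at least one''---the Leibniz expansion contains \emph{all} splits, including $(\partial_j\Phi)(\partial^\beta\partial_j\Phi)$, and $\partial_j\Phi$ is not controlled by any $\mathcal M_k$. The correct estimate is $|\partial^\beta g|\le \mathcal F(\mathcal M_{|\beta|+1})(1+|\nabla\Phi|)$ for $|\beta|\ge 2$ (this is exactly what the paper records in its induction). The extra $(1+|\nabla\Phi|)$ is harmless in the Fa\`a di Bruno step: it can only lower the exponent $k$ by at most the number $q$ of such factors, and one checks that the resulting range of $k$ is still contained in $[3,\,2+|\alpha|]$.

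Second, in your final step with $\alpha'=0$ you absorb $|\partial_i\Phi|\le|\nabla\Phi|$ to turn $|\nabla\Phi|^{-k}$ into $|\nabla\Phi|^{-(k-1)}$, but you only verify $k-1\le 1+|\alpha|$. You also need $k-1\ge 2$, i.e.\ $k\ge 3$; with your stated range $k\ge 2$ for $\partial^\alpha(g^{-1})$ this would produce an illegal $|\nabla\Phi|^{-1}$ term. The fix is already implicit in your own ``worst case'' computation (all $|\beta_m|=1$ gives $g^{-1-\ell}|\nabla\Phi|^{\ell}=|\nabla\Phi|^{-2-\ell}$ with $\ell\ge 1$, hence $k\ge 3$); you just need to record that sharper lower bound and carry it through.
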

\begin{proof}
We proceed by induction on $\vert \alpha \vert.$ A simple computation shows that \eqref{est:Ai} is true for $\vert \alpha \vert =1.$  Assume it is true for $\vert \alpha \vert \leq l$ and let $\vert \gamma \vert = l+1 \geq 2.$ Differentiating $\vert \gamma \vert$ times the equality $\vert \nabla \Phi \vert^2 A_i = \partial_i \Phi $ we obtain
\begin{equation}\label{rec1}
\begin{aligned}
  &\vert \nabla \Phi \vert^2 D_\xi^\gamma A_i  = (1) -(2)- (3), \quad  \text{with} \quad   (1)  = D_\xi^\gamma \partial_i \Phi   \\
   &(2)  = \sum_{\vert \beta\vert  =1} \binom{\gamma}{\beta} (D_\xi^\beta\vert \nabla \Phi \vert^2) D^{\gamma - \beta }A_i, \quad 
  (3)  = \sum_{2 \leq \vert \beta \vert \leq \vert \gamma \vert} \binom{\gamma}{\beta} (D_\xi^\beta\vert \nabla \Phi \vert^2) D^{\gamma - \beta }A_i.
  \end{aligned}
   \end{equation}
   We have $\vert (1) \vert \leq \mathcal{M}_{l+2}.$ By the induction, $\vert (2)\vert \leq C \mathcal{M}_2\vert \nabla \Phi \vert  \mathcal{F}(M_{l+1})\sum_{k=2}^{l+1} \frac{1}{\vert \nabla \Phi \vert^k}.$ Now for $ \vert \beta\vert \geq 2$ we have $\vert \gamma\vert-\vert \beta \vert \leq l-1.$ Since $\vert   D_\xi^\beta\vert \nabla \Phi \vert^2 \vert \leq C_2 \mathcal{M}_{ \vert \beta\vert +1} \vert \nabla \Phi \vert + \mathcal{F}( \mathcal{M}_{\vert \beta \vert})$ the induction shows that $\vert(3)\vert \leq C \mathcal{F}(\mathcal{M}_{l+2})(1+ \vert \nabla \Phi \vert)\sum_{k=2}^{l } \frac{1}{\vert \nabla \Phi \vert^k}.$ Dividing both members of the first equation in \eqref{rec1} by $\vert \nabla \Phi \vert^2$ we obtain eventually
   $$\vert  D_\xi^\gamma A_i  \vert \leq \mathcal{F}(\mathcal{M}_{l+2}) \sum_{k=2}^{l+2 } \frac{1}{\vert \nabla \Phi \vert^k}, \quad \vert \gamma \vert = l+1.$$ This completes the proof of \eqref{est:Ai}.
     \end{proof}
  We shall also need the following result. 
  \begin{lemm}
  On the set $\{\xi: 0< \vert \nabla \Phi(\xi) \vert \leq 2\}$ we have
  \begin{equation}\label{est:nablaphi}
  \vert \partial_\xi^\alpha \vert \nabla \Phi(\xi) \vert\vert  \leq \mathcal{F}(\mathcal{M}_{\vert \alpha \vert +1})\vert \nabla \Phi(\xi) \vert^{1- \vert \alpha \vert}, \quad \vert \alpha \vert \geq 1.
\end{equation}
\end{lemm}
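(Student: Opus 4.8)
The plan is to deduce everything from estimates on the \emph{smooth} function $g\defn|\nabla\Phi|^2=\sum_{j=1}^d(\partial_j\Phi)^2$, transferring them to $\rho\defn|\nabla\Phi|=\sqrt{g}$ through the algebraic identity $g=\rho^2$, which avoids differentiating the square root directly. All estimates are understood on the open set $\{0<\rho\le2\}$, and $\mathcal{F}$ denotes a non-decreasing function that may change from occurrence to occurrence.

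The first step is to show that for every $\alpha$ with $|\alpha|\ge1$,
\begin{equation}\label{plan:g}
|\partial_\xi^\alpha g(\xi)|\le\mathcal{F}(\mathcal{M}_{|\alpha|+1})\,\rho(\xi)^{2-|\alpha|}\qquad\text{on }\{0<\rho\le2\}.
\end{equation}
By Leibniz' formula, $\partial^\alpha g=\sum_{j=1}^d\sum_{\gamma\le\alpha}\binom{\alpha}{\gamma}(\partial^\gamma\partial_j\Phi)(\partial^{\alpha-\gamma}\partial_j\Phi)$. In the two extreme terms $\gamma\in\{0,\alpha\}$ one factor is $\partial_j\Phi$; by Cauchy--Schwarz in $j$ they are bounded by $C_d\mathcal{M}_{|\alpha|+1}\,\rho$, and since $\rho\le2$ and $|\alpha|\ge1$ one has $\rho\le2^{|\alpha|-1}\rho^{2-|\alpha|}$. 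For the remaining terms $1\le|\gamma|\le|\alpha|-1$, so both factors are derivatives of $\Phi$ of order between $2$ and $|\alpha|$, hence $\le\mathcal{M}_{|\alpha|}$; these terms are $\le C_d\mathcal{M}_{|\alpha|}^2$, and since $\rho\le2$ with $|\alpha|\ge2$ here, $1\le2^{|\alpha|-2}\rho^{2-|\alpha|}$. Summing yields \eqref{plan:g}.

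The second step is an induction on $|\alpha|$ for \eqref{est:nablaphi} itself. Differentiating $g=\rho\cdot\rho$ and isolating the top-order term gives, for $|\alpha|\ge1$,
\begin{equation}\label{plan:rec}
\partial^\alpha\rho=\frac{1}{2\rho}\Bigl(\partial^\alpha g-\sum_{1\le|\beta|\le|\alpha|-1}\binom{\alpha}{\beta}(\partial^\beta\rho)(\partial^{\alpha-\beta}\rho)\Bigr).
\end{equation}
For $|\alpha|=1$ the sum is empty and \eqref{plan:g} gives $|\partial^\alpha\rho|\le C_d\mathcal{M}_2=C_d\mathcal{M}_2\,\rho^{1-|\alpha|}$, the base case. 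Assuming \eqref{est:nablaphi} for all multi-indices of length in $\{1,\dots,|\alpha|-1\}$, the first term in \eqref{plan:rec} is controlled by \eqref{plan:g}, while for each term of the sum the induction hypothesis gives a bound $\mathcal{F}(\mathcal{M}_{|\beta|+1})\mathcal{F}(\mathcal{M}_{|\alpha-\beta|+1})\,\rho^{(1-|\beta|)+(1-|\alpha-\beta|)}=\mathcal{F}(\mathcal{M}_{|\alpha|})\,\rho^{2-|\alpha|}$, using $|\beta|+1\le|\alpha|$ and $|\alpha-\beta|+1\le|\alpha|$ since $1\le|\beta|\le|\alpha|-1$. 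Dividing by $2\rho$ produces $|\partial^\alpha\rho|\le\mathcal{F}(\mathcal{M}_{|\alpha|+1})\,\rho^{1-|\alpha|}$, closing the induction.

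The one point requiring care is the bookkeeping of powers of $\rho$ in the first step: the constant contribution $C_d\mathcal{M}_{|\alpha|}^2$ coming from the middle Leibniz terms can be absorbed into $\rho^{2-|\alpha|}$ only because we work where $\rho$ is bounded above (so $\rho^{2-|\alpha|}\ge2^{2-|\alpha|}$); without such an upper bound the estimate would already fail for $|\alpha|\ge3$. The rest is routine Leibniz and induction combinatorics.
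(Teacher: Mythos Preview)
Your proof is correct and follows essentially the same approach as the paper's: both differentiate the identity $|\nabla\Phi|^2=\sum_j(\partial_j\Phi)^2$ via Leibniz, isolate the top-order term $2\rho\,\partial^\alpha\rho$, and proceed by induction, using the upper bound $\rho\le 2$ to absorb the constant (middle-Leibniz) contributions into the desired power $\rho^{2-|\alpha|}$. Your presentation is slightly cleaner in that you record the bound $|\partial^\alpha g|\le\mathcal{F}(\mathcal{M}_{|\alpha|+1})\rho^{2-|\alpha|}$ as a separate preliminary step, whereas the paper carries the weaker bound $\mathcal{F}(\mathcal{M}_{|\gamma|+1})(1+\rho)$ through and only converts to $\rho^{1-|\gamma|}$ at the very end.
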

\begin{proof}
The proof goes by induction on $\vert \alpha \vert$. For $\vert \alpha \vert =1$ it is a simple computation. Assume this is true for  $1\leq \vert \alpha \vert \leq k$ let  $\vert \gamma \vert = k+1\geq 2.$ Set $F(\xi) =  \vert \nabla \Phi(\xi) \vert.$ Then we write 
$$\partial_\xi^\gamma \big(F(\xi) F(\xi)\big) \vert = \partial_\xi^\gamma \sum_{j=1}^d (\partial_j \Phi)^2.$$
 The right hand side is bounded by $\mathcal{F}(\mathcal{M}_{\vert \gamma \vert +1})(1+ F(\xi)).$ By the Leibniz formula the left hand side can be written as $2 F(\xi)  \partial_\xi^\gamma F(\xi)  $ plus a finite sum of terms of the form $\big(\partial_\xi^{\gamma_1} F(\xi)   \big)\big(\partial_\xi^{\gamma_2} F(\xi)\big)$ where $1 \leq \vert \gamma_j \vert \leq k$ and $\gamma_1 + \gamma_2= \gamma.$ For these last terms we can use the induction and we obtain
 $$F(\xi) \partial_\xi^\gamma F(\xi) \leq \mathcal{F}(\mathcal{M}_{\vert \gamma \vert +1})(1+F(\xi)+  F(\xi) \vert^{2- \vert \gamma \vert}). $$
 Dividing both members by $F(\xi)$ we obtain
 \begin{align*}
  \vert \partial_\xi^\gamma F(\xi)\vert  &\leq \mathcal{F}(\mathcal{M}_{\vert \gamma \vert +1})\Big(1+ \frac{1}{F(\xi)}+ F(\xi)^{1- \vert \gamma \vert}\Big)\\ 
  & \leq  \mathcal{F}(\mathcal{M}_{\vert \gamma \vert +1})F(\xi)^{1- \vert \gamma \vert} \big(F(\xi)^{  \vert \gamma \vert-1} +F(\xi)^{  \vert \gamma \vert-2} +1\big).
 \end{align*}
 Since $\vert \gamma \vert \geq 2$ and $F(\xi) \leq 2$ we obtain the desired result.
 \end{proof}
 \begin{lemm}
  Consider the vector field $\mathcal{L} = A\cdot \nabla$ where  $A_i = \frac{\partial_i \Phi}{\vert \nabla \Phi \vert^2}.$ For any $N \in \xN$ we have
   \begin{equation}\label{Ltranspose}
     (\,  {}\!^t \mathcal{L})^N  = \sum_{\vert \alpha \vert \leq N} c_{\alpha,N}\partial^\alpha,  \quad \text{with} \quad  
   \vert \partial_\xi^\beta c_{\alpha,N} \vert  \leq \mathcal{F}(\mathcal{M}_{ N- \vert \alpha \vert + \vert\beta \vert +1})\sum_{k=N}^{2N-\vert \alpha\vert  + \vert \beta \vert } \frac{1}{\vert \nabla \Phi \vert^k}.
  \end{equation}
  (Here  we set $\mathcal{M}_1 = 1.$ It occurs when $\beta = 0, \vert \alpha\vert =N.$)
  \end{lemm}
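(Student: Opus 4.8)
The plan is to argue by induction on $N$, starting from an explicit formula for the transpose. Integrating by parts (all functions being compactly supported and $A$ real), one has, for any test function $u$,
$$
{}^t\mathcal{L}\,u=-\sum_{i=1}^d\partial_i(A_i u),\qquad\text{i.e.}\qquad {}^t\mathcal{L}=-\mathcal{L}-\Big(\sum_i\partial_iA_i\Big).
$$
This already gives \eqref{Ltranspose} for $N=1$, with $c_{e_i,1}=-A_i$ and $c_{0,1}=-\sum_i\partial_iA_i$: the bound on $\partial^\beta c_{e_i,1}$ is exactly \eqref{est:Ai} (together with the elementary inequality $|A_i|=|\partial_i\Phi|\,|\nabla\Phi|^{-2}\le|\nabla\Phi|^{-1}$, which is also what accounts for the convention $\mathcal{M}_1=1$), and the bound on $\partial^\beta c_{0,1}$ is \eqref{est:Ai} applied to $\partial^{\beta+e_i}A_i$. (One may equally start the induction at $N=0$, where $c_{0,0}=1$.)

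For the induction step, assume \eqref{Ltranspose} at rank $N$. Applying ${}^t\mathcal{L}$ to $({}^t\mathcal{L})^N u=\sum_{|\alpha|\le N}c_{\alpha,N}\partial^\alpha u$ and expanding $\partial_i\big(A_i c_{\alpha,N}\partial^\alpha u\big)$ by the Leibniz rule, one collects the coefficient of $\partial^\gamma u$ and obtains the recursion
$$
c_{\gamma,N+1}=-\sum_{i=1}^d\Big[(\partial_iA_i)\,c_{\gamma,N}+A_i\,\partial_ic_{\gamma,N}+A_i\,c_{\gamma-e_i,N}\Big],
$$
valid for $|\gamma|\le N+1$, with the conventions $c_{\alpha,N}=0$ if $|\alpha|>N$ and $c_{\gamma-e_i,N}=0$ if $\gamma_i=0$.

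It then remains to apply $\partial^\beta$ to this recursion, expand again by Leibniz, and estimate each resulting product. For the factors coming from $A_i$ one uses \eqref{est:Ai} when at least one derivative falls on $A_i$ and the bare bound $|A_i|\le|\nabla\Phi|^{-1}$ otherwise; for the factors coming from $c_{\gamma,N}$, $\partial_ic_{\gamma,N}$, $c_{\gamma-e_i,N}$ one uses the induction hypothesis. In every product one then checks two things: first, that the arguments of the two $\mathcal{F}$-factors are each $\le(N+1)-|\gamma|+|\beta|+1$, so that — $\mathcal{M}_k$ and $\mathcal{F}$ being nondecreasing, and $\mathcal{F}$ being allowed to change from line to line — their product is absorbed into a single $\mathcal{F}\big(\mathcal{M}_{(N+1)-|\gamma|+|\beta|+1}\big)$; second, that the powers of $|\nabla\Phi|^{-1}$ produced all lie between $N+1$ (attained by the terms in which $A_i$ is undifferentiated) and $2(N+1)-|\gamma|+|\beta|$ (attained by taking the top power in each of the two sums). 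Since for fixed $d$ and $N$ there are only finitely many terms, the combinatorial constants are harmless.

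The only delicate point is the index bookkeeping in this last step, in particular the edge cases $|\gamma|=N$ and $|\gamma|=N+1$, and $\beta'=0$ in the Leibniz expansion, where one must fall back on $|A_i|\le|\nabla\Phi|^{-1}$ and on $\mathcal{M}_1=1$; the case $\beta=0,\ |\gamma|=N+1$ is exactly the one singled out in the statement, and there the recursion reduces to $c_{\gamma,N+1}=-\sum_{i:\gamma_i\ge1}A_i c_{\gamma-e_i,N}$, bounded by $\mathcal{F}(1)\,|\nabla\Phi|^{-(N+1)}$ as required. Note that, since \eqref{est:Ai} holds wherever $\nabla\Phi\ne0$, no hypothesis on the size of $|\nabla\Phi|$ enters here.
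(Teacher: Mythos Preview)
Your proof is correct and follows essentially the same route as the paper's: induction on $N$, the recursion $c_{\gamma,N+1}$ obtained by expanding $\partial_i(A_i c_{\alpha,N}\partial^\alpha u)$, then Leibniz combined with \eqref{est:Ai} and the induction hypothesis to control $\partial^\beta c_{\gamma,N+1}$. The paper writes out the three ranges $|\gamma|=0$, $1\le|\gamma|\le N$, $|\gamma|=N+1$ separately and checks a representative case in detail, while you package them into a single recursion with the convention $c_{\alpha,N}=0$ for $|\alpha|>N$; the index bookkeeping you sketch (in particular the edge case $\beta'=0$ handled via $|A_i|\le|\nabla\Phi|^{-1}$ and $\mathcal{M}_1=1$) matches the paper's computations.
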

  \begin{proof}
Again we proceed by  induction on $N.$ For $N=1$ we have $c_{\alpha,N} = A_i$ if $\vert \alpha \vert =1$ and $c_{\alpha,N} = \text{div} A$ if $\vert \alpha \vert =0.$ Then \eqref{Ltranspose} follows immediately from \eqref{est:Ai}. Assume that \eqref{Ltranspose} is true up to the order $N$ and let us prove it  for $N+1.$ We write
\begin{align*}
 (\,  {}\!^t \mathcal{L})^{N+1}& = \,  {}\!^t \mathcal{L} (\,  {}\!^t \mathcal{L})^N  = (\nabla\cdot A ) (\,  {}\!^t \mathcal{L})^N = \sum_{\vert \alpha \vert \leq N} \sum_{i=1}^d \partial_i (A_i c_{\alpha,N}\partial^\alpha),\\
 &=  \sum_{\vert \alpha \vert \leq N}  (\text{div } A)c_{\alpha,N}\partial^\alpha  +\sum_{\vert \alpha \vert \leq N}   A\cdot \nabla c_{\alpha,N} \partial^\alpha + \sum_{\vert \alpha \vert \leq N} \sum_{i=1}^d  A_i c_{\alpha,N} \partial_i\partial^\alpha,\\
 &=  \sum_{\vert \gamma \vert \leq N+1} c_{\gamma,N+1}\partial^\gamma,
 \end{align*}
 where
 \begin{align*}
 &  \quad c_{0, N+1} = (\text{div } A) c_{0,N} +  A\cdot \nabla c_{0,N}, \quad \text{if} \quad \vert \gamma \vert =0,\\
& \quad c_{\gamma, N+1}= (\text{div } A)c_{\gamma,N} +A\cdot \nabla c_{\gamma,N} + A_i c_{\alpha,N}  \quad \vert \alpha \vert = \vert \gamma \vert -1,\text{ if } \quad 1\leq \vert \gamma \vert \leq N,\\
 &    \quad c_{\gamma,N+1} = A_i c_{\alpha,N}, \quad  \text{ if } \partial^\gamma = \partial_i \partial^\alpha, \vert \alpha \vert = N,\text{ if } \quad \vert \gamma \vert = N+1.
\end{align*}
We estimate now each coefficient. First of all $\partial^\beta c_{0,N+1}$ is a finite sum of terms of the form $(\partial^{\beta_1}\partial_i A_i)( \partial^{\beta_2} c_{0,N})$ and  $(\partial^{\beta_1}A_i)( \partial^{\beta_2} \partial_i c_{0,N})$ with $\beta = \beta_1 + \beta_2.$ Using \eqref{est:Ai} and the induction the first term  is bounded by 
$$\mathcal{F}(\mathcal{M}_{\vert \beta_1 \vert +2}) \sum_{k=2}^{\vert \beta_1\vert+2}\vert \nabla \Phi \vert^{-k} \mathcal{F}(\mathcal{M}_{ N + \vert \beta_2\vert +1}) \sum_{l=N}^{2N + \vert \beta_2\vert } \vert \nabla \Phi \vert^{-l}.  $$
Concerning the second term, if $\beta_1 = 0, \beta_2 = \beta$ it is bounded by 
$$\frac{1}{ \vert \nabla \phi \vert } \mathcal{F}(\mathcal{M}_{N+ \vert \beta \vert +2})\sum_{l=N}^{2N + \vert \beta  \vert +1} \vert \nabla \Phi \vert^{-l} \leq   \mathcal{F}(\mathcal{M}_{N+1+ \vert \beta \vert +1})\sum_{l=N+1}^{2(N+1) + \vert \beta  \vert} \vert \nabla \Phi \vert^{-l}. $$
If $\beta_1 \neq 0$ it is bounded by
    $$\mathcal{F}(\mathcal{M}_{\vert \beta_1 \vert +1}) \sum_{k=2}^{\vert \beta_1\vert+1}\vert \nabla \Phi \vert^{-k} \mathcal{F}(\mathcal{M}_{ N + \vert \beta_2\vert +1}) \sum_{l=N}^{2N + \vert \beta_2 \vert +1} \vert \nabla \Phi \vert^{-l}. $$
Since $ N+2 \leq k+l \leq 2N +2+ \vert \beta_1\vert + \vert \beta_2 \vert = 2(N+1) + \vert \beta \vert $ we see that $\partial^\beta c_{0,N+1}$ satisfies the estimate in \eqref{Ltranspose} with $N$ replaced by $N+1.$ 

Let us look to the term $\partial^\beta c_{\gamma,N+1}$ with $\vert \gamma \vert = N+1.$ This term is also a finite sum of terms of the form $(\partial^{\beta_1}A_i )(\partial^{\beta_2}c_{\alpha,N}), \vert \alpha \vert = \vert \gamma \vert -1.$  As above, if $\beta_1 =0,$ using \eqref{est:Ai} and the induction it is bounded by
\begin{align*}
 \frac{1}{ \vert \nabla \phi \vert } \mathcal{F}(\mathcal{M}_{N - \vert \gamma \vert +1+ \vert \beta \vert +1}) &\sum_{l=N}^{2N -\vert \gamma \vert +1+ \vert \beta  \vert  } \vert \nabla \Phi \vert^{-l}\\
 & \leq \mathcal{F}(\mathcal{M}_{N+1 - \vert \gamma \vert + \vert \beta \vert +1})\sum_{l=N+1}^{2(N+1) -\vert \gamma \vert + \vert \beta  \vert  } \vert \nabla \Phi \vert^{-l}.
 \end{align*}
 If $\beta_1 \neq 0$ it is   bounded by
$$\mathcal{F}(\mathcal{M}_{\vert \beta_1\vert +1}) \sum_{k=2}^{\vert \beta_1\vert+1}\vert \nabla \Phi \vert^{-k} \mathcal{F}(\mathcal{M}_{ N - \vert \gamma  \vert +1+ \vert \beta_2\vert}) \sum_{l=N}^{2N -\vert \gamma \vert+1+ \vert \beta_2 } \vert \nabla \Phi \vert^{-l}. $$
Since $N+2 \leq k+l \leq 2N +2- \vert \gamma \vert + \vert \beta \vert$ we see that $\partial^\beta c_{\gamma,N+1}$ satisfies also the estimate in \eqref{Ltranspose}. The estimates of the other terms are similar and left to the reader.
\end{proof}
\subsection{Proof of Theorem  \ref{PS2}}
Case 1. Let $\psi\in C_0^\infty(\xR)$ be such that $\psi(x) = 1$ if $\vert x \vert \leq 1$, $\psi(x) = 0$ if $\vert x\vert \geq 2.$ With the notation in \eqref{Ij0},   $j$ beeing fixed, we write
\begin{equation}\label{Ij}
\begin{aligned}
 I_j(\lambda)  = \int e^{i \lambda \Phi(\xi)}   \psi \big(\lambda^\mez &\vert \nabla \Phi(\xi)\vert\big) \chi_j(\xi)b(\xi)\, d\xi \\
 + &\int e^{i \lambda\Phi(\xi})  (1-\psi \big(\lambda^\mez \vert \nabla \Phi(\xi)\vert\big)) \chi_j(\xi)b(\xi)\, d\xi 
  =: K_j(\lambda) + L_j (\lambda).
 \end{aligned}
 \end{equation}
 We shall use (see \eqref{injective}) the fact that on the support of $\chi_j$ the map $\xi \mapsto \nabla \Phi(\xi)$ is injective. Let us estimate $K_j$. We write
 $$\vert K_j(\lambda) \vert \leq \int  \vert \psi \big(\lambda^\mez \vert \nabla \Phi(\xi)\vert\big) \chi_j(\xi) b(\xi) \vert \, d\xi$$
 and we set $\eta = \lambda^\mez \nabla \Phi(\xi)$ then $d\eta = \lambda^{\frac{d}{2}} \vert \text{det\,Hess }\Phi(\xi)\vert \, d\xi.$ Then using \eqref{hypE} $(iii)$ and   the notations therein we obtain
 \begin{equation}\label{est:Jj}
   \vert K_j(\lambda) \vert \leq \frac{C_d}{a_0} \mathcal{N}_0\lambda^{-\frac{d}{2}}.
   \end{equation}
 To estimate $L_j$ we introduce  the vector field  $X = \frac{1}{i \lambda}\frac{\nabla \Phi}{\vert \nabla \Phi\vert^2}\cdot \nabla $ which satisfies
 $$X e^{i \lambda \Phi} = e^{i \lambda \Phi}.$$
Now with $N \geq 1$ to be chosen we write
$$L_j(\lambda) = \int e^{i\lambda\Phi(\xi)} ({}^tX)^N\Big\{ (1-\psi \big(\lambda^\mez \vert \nabla \Phi(\xi)\vert\big)) \chi_j(\xi)b(\xi)\Big\}\, d\xi.$$
Since $X = \frac{1}{i\lambda}\mathcal{L}$ we can use \eqref{Ltranspose} and we obtain
\begin{align*}
  &\vert L_j(\lambda) \vert \leq C_N \sum_{\substack{\alpha= \alpha_1 + \alpha_2 + \alpha_3\\ \vert \alpha \vert \leq N}} S_{\alpha,N}, \quad \text{where}\\
     &S_{\alpha,N} =\lambda^{-N} \int \vert c_{\alpha,N} \vert \la\partial_\xi^{\alpha_1} \big[1-\psi \big(\lambda^\mez \vert \nabla \Phi(\xi)\vert\big)\big] \ra \la \partial_\xi^{\alpha_2}\chi_j(\xi)\ra   \la \partial_\xi^{\alpha_3}b(\xi)\ra \, d\xi. 
\end{align*}
Our aim is to prove that with an appropriate choice of $N$ we have
\begin{equation}\label{est:Kj}
\vert L_j(\lambda) \vert \leq  \mathcal{F}(\mathcal{M}_{d+2}) \mathcal{N}_{d+1}\frac{1}{a_0}\lambda^{-\frac{d}{2}}.
\end{equation}

Step 1. $\alpha_1 =0.$ Here we integrate on the set $\vert \nabla \Phi(\xi) \vert \geq \lambda^{-\mez}.$  We use \eqref{injective}, the bounds \eqref{derchij}, \eqref{Ltranspose}, \eqref{hypE} $(iii)$  and we make the change of variable $\eta = \nabla \Phi(\xi);$ then $d\eta =   \vert \text{det\,Hess }\Phi(\xi)\vert \, d\xi $ then
\begin{align*}
 \vert S_{\alpha,N} \vert &\leq \frac{\lambda^{-N}}{a_0}  \delta^{-\vert \alpha_2 \vert}Ê\mathcal{N}_{\vert \alpha_3\vert} \mathcal{F} (\mathcal{M}_{N+1}) \sum_{k=N}^{2N-\vert \alpha \vert} \int_{ \vert \eta \vert \geq \lambda^{-\mez} } \frac{d\eta}{\vert \eta \vert^k}\\
 &\leq \frac{C_d \lambda^{-N} }{a_0} \delta^{-\vert \alpha_2 \vert}Ê\mathcal{N}_{\vert \alpha_3\vert}  \mathcal{F} (\mathcal{M}_{N+1})\sum_{k=N}^{2N-\vert \alpha \vert}\int_{\lambda^{-\mez}}^{+ \infty} r^{d-1-k}\, dr.
\end{align*}
 Taking $N= d+1$ since $\vert \alpha_j  \vert \leq \vert \alpha  \vert \leq N$ we see that 
 \begin{align*} 
  \vert S_{\alpha,N} \vert &\leq   \frac{  \lambda^{-N}}{a_0}   \delta^{-\vert \alpha  \vert}Ê\mathcal{N}_{d+1} \mathcal{F} (\mathcal{M}_{d+2})\lambda^{N- \mez \vert \alpha  \vert -\frac{d}{2}}\\
  & \leq \frac{1 }{a_0}\mathcal{N}_{d+1} \mathcal{F} (\mathcal{M}_{d+2})\lambda^{- \frac{d}{2}} \frac{1}{(\lambda^\mez \delta)^{\vert \alpha \vert}}.
  \end{align*}
 Since by \eqref{delta} $\delta$ is proportional to $ a_0$ and by \eqref{reduc1} we have assumed that $\lambda^\mez a_0 \ge 1$ we obtain eventually $ \vert S_{\alpha,N} \vert \leq \frac{1 }{a_0}\mathcal{N}_{d+1} \mathcal{F} (\mathcal{M}_{d+2})\lambda^{- \frac{d}{2}} $
 
 Step 2. $\alpha_1 \neq 0.$ Here, since we differentiate $\psi$ we are integrating on the set  $ \lambda^{-\mez} \leq \vert \nabla \Phi(\xi) \vert \leq 2\lambda^{-\mez} \leq 1.$ We can therefore use \eqref{est:nablaphi}.
 
 We have to estimate
 $$S_{\alpha,N} = \lambda^{-N} \int \vert c_{\alpha,N} \vert \la \partial_\xi^{\alpha_1} \big[ \psi \big(\lambda^\mez \vert \nabla \Phi(\xi)\vert\big)\big] \ra \la \partial_\xi^{\alpha_2}\chi_j(\xi)\ra   \la \partial_\xi^{\alpha_3}b(\xi)\ra \, d\xi.$$
 By the Faa-di-Bruno formula we have
 $$ \partial_\xi^{\alpha_1} \big[ \psi \big(\lambda^\mez \vert \nabla \Phi(\xi)\vert\big)\big]  = \sum_{1 \leq \vert \beta \vert \leq \vert \alpha_1 \vert} a_{\alpha,\beta}\psi^{(\beta)} \big(\lambda^\mez \vert \nabla \Phi(\xi)\vert\big) \prod_{i=1}^s \big(\lambda^\mez \partial_\xi^{l_i} \vert \nabla \Phi \vert\big)^{k_i}$$
 where $a_{\alpha,\beta}$ are absolute constants, $\sum_{i=1}^s k_i = \beta, \sum_{i=1}^s k_i \vert l_i \vert = \vert \alpha_1 \vert.$ Using \eqref{est:nablaphi} we deduce that
 $$ \la \partial_\xi^{\alpha_1} \big[ \psi \big(\lambda^\mez \vert \nabla \Phi(\xi)\vert\big)\big] \ra \leq \mathcal{F}(\mathcal{M}_{\vert \alpha_1 \vert +1})\sum_{1 \leq \vert \beta \vert \leq \vert \alpha_1 \vert} \lambda^{ \frac{\vert \beta\vert}{2}} \la \psi^{(\beta)} \big(\lambda^\mez \vert \nabla \Phi(\xi)\vert\big) \ra \vert \nabla \Phi \vert^{\vert \beta \vert - \vert \alpha_1 \vert}.$$
Using \eqref{Ltranspose} and the fact that $\vert \nabla \Phi(\xi)\vert \leq 1$ we have $\vert c_{\alpha,N} \vert \leq \mathcal{F}(\mathcal{M}_{N +1}) \vert \nabla \Phi(\xi)\vert^{\vert \alpha \vert -2N}.$ Eventually we have $ \la \partial_\xi^{\alpha_2}\chi_j(\xi)\ra \leq C_\alpha \delta^{- \vert \alpha_2 \vert}.$ Performing as above the change of variables $\eta = \nabla \Phi(\xi)$ we will have
 \begin{align*}
   \vert S_{\alpha,N} \vert &\leq  \frac{\lambda^{-N}  \delta^{- \vert \alpha_2 \vert}}{a_0}\mathcal{F}(M_{N +1}) \mathcal{N}_{\vert \alpha \vert}\sum_{1 \leq \vert \beta \vert \leq \vert \alpha_1 \vert} \lambda^{\frac{\vert \beta\vert}{2}}\int_{\lambda^{-\mez} \leq \vert\eta \vert \leq 2\lambda^{-\mez}} \vert \eta \vert^{\vert \alpha \vert -2N + \vert \beta \vert - \vert \alpha_1 \vert}\, d\eta\\
  &\leq \frac{1}{a_0} \mathcal{F}(\mathcal{M}_{N +1})\mathcal{N}_{\vert \alpha \vert} \lambda^{-\frac{d}{2}} (\lambda^\mez \delta)^{-\vert \alpha_2 \vert}.
  \end{align*}
  Since $\lambda^\mez \delta \geq \mathcal{F}(\mathcal{M}_3)$ we obtain $\vert S \vert \leq \frac{1}{a_0}\mathcal{F}(\mathcal{M}_{N +1})\mathcal{N}_{N} \lambda^{-\frac{d}{2}}.$ Therefore \eqref{est:Kj} is proved. Using \eqref{Ij}, \eqref{est:Jj},  \eqref{est:Kj} since $N = d+1$ we obtain
  $$\vert I_j \vert \leq  \frac{1}{a_0} \mathcal{F}(\mathcal{M}_{d+2}) \mathcal{N}_{d+1}\lambda^{-\frac{d}{2}}, \quad 1 \leq j \leq J.$$
  Now since $J \leq C_d\delta^{-d} \leq \mathcal{F}(\mathcal{M}_3) a_0^{-d}$ using \eqref{Ij0} we obtain eventually
  $$\vert I(\lambda) \vert \leq \frac{1}{a_0^{1+d}} \mathcal{F}(\mathcal{M}_{d+2}) \mathcal{N}_{d+1}\lambda^{-\frac{d}{2}}$$
  which completes the proof of the first case of the theorem.
 
  We prove now the second part of Theorem \ref{PS2}.

In  that case it is not necessary to make a localization of $I(\lambda)$ in small balls of size $\delta$ as in the first case.

Then as before we write
\begin{equation}\label{I(lambda)}
\begin{aligned}
 I (\lambda)  = \int e^{i \lambda \Phi(\xi)}   \psi \big(\lambda^\mez &\vert \nabla \Phi(\xi)\vert\big)  b(\xi)\, d\xi \\
 + &\int e^{i \lambda\Phi(\xi)}   (1-\psi \big(\lambda^\mez \vert \nabla \Phi(\xi)\vert\big))  b(\xi)\, d\xi 
  =: K (\lambda) + L (\lambda) 
 \end{aligned}
 \end{equation}
and the final estimate follows from \eqref{est:Jj} and \eqref{est:Kj}.
 

\begin{thebibliography}{10}
 \small 
 

 \bibitem{ABZ}
Thomas Alazard; Nicolas Burq;   Claude Zuily.
\newblock Strichartz estimates and the Cauchy problem for the gravity water waves equations.
 \newblock {\em To appear}, 
  \bibitem{BDC}
Hajer Bahouri;Jean Yves Chemin; Raphael Danchin.
\newblock Fourier Analysis and Nonlinear Partial Differential Equations.
   \newblock  Grundlehren der mathematischen Wissenschaften Vol 343, 2011.
 \bibitem{FRT}
   Luiz Gustavo Farah;  Frederic Rousset; Nikolay Tzvetkov  
  \newblock{Oscillatory integral estimates and global well-posedness for the 2D Boussinesq equation}. Bull. Braz. Math. Soc. (N.S.) 43 (2012), no. 4, 655-679.
  
\bibitem{Ho} Lars H\" ormander.
 \newblock The Analysis of Linear Partial Differential Operators I: Distribution Theory And Fourier Analysis.
 \newblock  Grundlehren der mathematischen Wissenschaften Vol 246, 1983.
 
 
  \bibitem{MS} Camil Muscalu; Whilem Schlag.
\newblock Classical and Multilinear Harmonic Analysis. 
 \newblock  Cambridge Studies in Advanced Mathematics) (Volume 1).
 
 
  \bibitem{S} Elias Stein.
\newblock Harmonic Analysis.
 \newblock Princeton University press Princeton , N.J.  1983. 
 
  

 \end{thebibliography}
 \end{document}